\newtheorem{theorem}{Theorem}[section]
\newtheorem{lemma}[theorem]{Lemma}
\theoremstyle{definition}
\newtheorem{definition}[theorem]{Definition}
\theoremstyle{remark}
\newtheorem{remark}[theorem]{Remark}
\numberwithin{equation}{section}
\newcommand{\rn}{{{\mathbb R}^n}}
\DeclareMathOperator*{\esssup}{ess\,sup}
\DeclareRobustCommand{\rchi}{{\mathpalette\irchi\relax}}
\newcommand{\irchi}[2]{\raisebox{\depth}{$#1\chi$}}
\newcommand{\avg}[1]{\langle #1 \rangle}
\def\Xint#1{\mathchoice
   {\XXint\displaystyle\textstyle{#1}}%
   {\XXint\textstyle\scriptstyle{#1}}%
   {\XXint\scriptstyle\scriptscriptstyle{#1}}%
   {\XXint\scriptscriptstyle\scriptscriptstyle{#1}}%
   \!\int}
\def\XXint#1#2#3{{\setbox0=\hbox{$#1{#2#3}{\int}$}
     \vcenter{\hbox{$#2#3$}}\kern-.5\wd0}}
\def\avgint{\Xint-}
\begin{document}

\title[Weighted weak-type inequalities]
{On those Weights Satisfying a Weak-Type Inequality for the Maximal Operator and Fractional Maximal Operator}

\author{Brandon Sweeting}
\address{Department of Mathematics \\
Washington University in Saint Louis \\
1 Brookings Drive \\
Saint Louis \\
MO 63130, USA}
\email{sweeting@wustl.edu}

\subjclass[2010]{Primary 42B20, 42B25, 42B35}

\date{\today}

\thanks{The author is supported by the National Science Foundation under Grant No. DMS-2402316}

\begin{abstract}
    In \cite{MR447956}, Muckenhoupt and Wheeden formulated a weighted weak $(p,p)$ inequality where the weight for the weak $L^p$ space is treated as a multiplier rather than a measure. They proved such inequalities for the Hardy-Littlewood maximal operator and the Hilbert transform for weights in the class $A_p$, while also deriving necessary conditions to characterize the weights for which these estimates hold. In this paper, we establish the sufficiency of these conditions for the maximal operator when $p > 1$ and present corresponding results for the fractional maximal operators. This completes the characterization and resolves the open problem posed by Muckenhoupt and Wheeden for $p > 1$. 
\end{abstract}

\maketitle

\section{Introduction}
Our results concern a weighted weak $(p,p)$ inequality first introduced by Muckenhoupt and Wheeden in \cite{MR447956}. By a weight, we mean a measurable, non-negative a.e. function on $\mathbb{R}^n$. Typically, for an exponent $1 \leq p < \infty$ and weight $w$, a weighted weak $(p,p)$ inequality for an operator $T$ refers to an inequality of the form
\begin{equation} \label{eqn:weak-measure}
    w(\{x \in \rn : |Tf(x)|> \lambda\}) \leq \frac{C}{\lambda^p} \int_{\mathbb{R}^n}|f|^pw\,dx, 
\end{equation}
where $w(E) := \int_E w(x)\,dx$. By Chebyshev's inequality, with respect to the measure $w\,dx$, such inequalities are implied by  the corresponding strong $(p,p)$ inequalities
\[ 
    \int_\rn |Tf|^pw\,dx \leq C\int_\rn |f|^pw\,dx.
\]
However, if we instead treat the weight $w$ as a multiplier, rather than a measure, we can rewrite this inequality to obtain a strong-type $(p,p)$ inequality of the form
\[ 
    \int_\rn |w^{\frac{1}{p}}Tf|^p\,dx \leq C\int_\rn |f|^pw\,dx. 
\]
Again, by Chebyshev's inequality, this implies the following weak-type inequality
\begin{equation} \label{eqn:weak-mult}
 |\{x \in \rn : |w(x)^{\frac{1}{p}}Tf(x)|> \lambda\}| \leq \frac{C}{\lambda^p} \int_\rn |f|^pw\,dx. 
\end{equation}
These inequalities were first considered by Muckenhoupt and Wheeden in~\cite{MR447956}: when $n=1$ they proved that they hold for $1\leq p<\infty$ for the Hardy-Littlewood maximal operator and the Hilbert transform provided $w$ is in the Muckenhoupt class $A_p$. Their work was extended to higher dimensions and all singular integrals by Cruz-Uribe, Martell and P\'erez~\cite{MR2172941}. To distinguish this kind of inequality from~\eqref{eqn:weak-measure}, we refer to inequalities like~\eqref{eqn:weak-mult} as {\em multiplier weak-type inequalities}.

Recently, there has been a renewed interest in multiplier weak-type inequalities.  In~\cite{MR4269407}, Cruz-Uribe, Isralowitz, Moen, Pott, and Rivera-Ríos showed that they are the correct approach to generalize weighted weak-type inequalities to matrix $A_p$ weights,
\[
    \|W^{\frac1p}T(W^{-\frac{1}{p}}f)\|_{L^{p,\infty}} \lesssim \|f\|_{L^p}. 
\]
Here, $f: \mathbb{R}^n \rightarrow \mathbb{R}^d$ and the weight $W: \mathbb{R}^n \rightarrow \mathcal{S}^d$ is now a matrix-valued function from $\mathbb{R}^n$ into the space of $d \times d$ positive-semidefinite matrices. They explored quantitative estimates for these inequalities in the case when $W$ is a matrix $A_1$ weight and $T$ is either a vector-valued singular integral operator or the Christ-Goldberg maximal operator. This work was extended by Cruz-Uribe and the author \cite{Cruz-Uribe2024-jz} to the case $p > 1$ and to fractional integral operators and fractional maximal operators. Further estimates in this direction have been obtained by Lerner, Li, Ombrosi and Rivera-Ríos \cite{LLORR-2023, LLORR-2024} where sharp norm dependence on the $A_p$ characteristic were obtained for both the Hardy-Littlewood and Christ-Goldberg maximal operator when $1 < p < 2$ and improved estimates obtained for singular integrals in the scalar case. Most recently \cite{chen-2024}, estimates for multiplier weak-type inequalities were obtained by Chen for fractionally sparse form dominated operators, extending and sharpening the results for the fractional operators considered in \cite{Cruz-Uribe2024-jz}. 

Apart from applications to matrix-weighted inequalities, multiplier weak-type inequalities have been generalized to the multilinear setting in \cite{NSS-2024}, where Nieraeth, Stockdale, and the author proved multilinear multiplier weak-type inequalities for multilinear sparse-form dominated operators and multilinear $A_p$ weights. Additionally, these inequalities can be viewed as a first step in establishing estimates of the form
\[
    \|u^{\frac\alpha p}Tf\|_{L^{p,\infty}(u^{1-\alpha})} \lesssim \|f\|_{L^p(v)},
\]
where part of the weight $u$ is treated as a measure and the other part as a multiplier. Multiplier weak-type inequalities are a special case where $u = v$ and the entire weight $u$ is treated as a multiplier, i.e. $\alpha = 1$. Such estimates, using an interpolation with a change of measures argument of Stein and Weiss \cite{MR92943}, can be employed to obtain strong-type two-weight estimates.

\begin{remark}
    Two-weight generalizations of multiplier weak-type inequalities were introduced by Sawyer~\cite{MR776188} when $n=1$. His results were extended to higher dimensions in~\cite{MR2172941}, and have since been considered by a number of
    authors: see, for example,~\cite{MR3961329,MR3850676,MR3498179,MR4002540}.
\end{remark}

Multiplier weak-type inequalities require more subtle techniques to prove and differ significantly from standard weighted weak-type inequalities, even for the maximal operator. For instance, \cite{MR447956} demonstrated that $|x|^{-1}$ is an admissible weight for \eqref{eqn:weak-mult} when $n = 1$ and $p \geq 1$ for the maximal operator, and $p = 1$ for the Hilbert transform, despite this function not belonging to any $A_p$ class. This result led to the open problem posed by Muckenhoupt and Wheeden regarding the class of weights for which these estimates hold. In this paper, we resolve this problem when $p > 1$ by introducing the weight classes $A_p^*$ and $A_{p,q}^*$, which characterize \eqref{eqn:weak-mult} for the maximal operator, $M$, and fractional maximal operators, $M_\alpha$, respectively. The main results are:

\begin{theorem}\label{main_theorem}
    If $1 < p < \infty$ and $w$ is a weight, then 
    \begin{equation}\label{multiplier_M}
        \|w^\frac{1}{p}Mf\|_{L^{p,\infty}} \lesssim \|f\|_{L^p(w)}
    \end{equation}
    for each $f \in L^p(w)$ if and only if $w \in A_{p}^*$.
\end{theorem}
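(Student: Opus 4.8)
The two implications have quite different flavours. The ``only if'' direction is the classical Muckenhoupt--Wheeden testing computation: test \eqref{multiplier_M} with $f=\sigma\chi_Q=w^{1-p'}\chi_Q$ for an arbitrary cube $Q$. A short calculation gives $\|f\|_{L^p(w)}^p=\sigma(Q)$ and $Mf\ge\sigma(Q)/|Q|$ on $Q$, so that for every $t>0$ the set $\{x\in Q:w(x)^{1/p}\avg{f}_Q>\lambda\}$ is $\{x\in Q:w(x)>\lambda^p(|Q|/\sigma(Q))^p\}$; feeding this into \eqref{multiplier_M} and reparametrising $t=\lambda^p(|Q|/\sigma(Q))^p$ yields exactly the defining inequality of $A_p^*$, namely $t\,|\{x\in Q:w(x)>t\}|\,(\sigma(Q)/|Q|)^{p-1}\le C|Q|$ for all cubes $Q$ and all $t>0$, with $[w]_{A_p^*}$ controlled by the $p$-th power of the weak-type constant.

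For sufficiency I would assume $[w]_{A_p^*}<\infty$ and first make the standard reductions: it suffices to bound $\|w^{1/p}M^{\mathcal D}f\|_{L^{p,\infty}}$ for each of the finitely many shifted dyadic grids $\mathcal D$, with $f\ge0$ bounded and compactly supported. Write $g:=fw^{p'-1}$, so that $\|g\|_{L^p(\sigma)}=\|f\|_{L^p(w)}$ and $\avg{f}_Q=(\sigma(Q)/|Q|)\,\avg{g}^{\sigma}_Q$, where $\avg{\cdot}^{\sigma}$ is a $\sigma$-average. The engine is a single per-cube estimate: for every dyadic cube $Q$ and every $\lambda>0$,
\[
|\{x\in Q:w(x)^{1/p}\avg{f}_Q>\lambda\}|\ \le\ \frac{[w]_{A_p^*}}{\lambda^p}\,\sigma(Q)\,\bigl(\avg{g}^{\sigma}_Q\bigr)^p,
\]
which follows immediately from the $A_p^*$ inequality applied with $t=\lambda^p\avg{f}_Q^{-p}$ together with $\avg{f}_Q=(\sigma(Q)/|Q|)\avg{g}^{\sigma}_Q$.

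Since $\{w^{1/p}M^{\mathcal D}f>\lambda\}=\bigcup_Q\{x\in Q:w(x)^{1/p}\avg{f}_Q>\lambda\}$, the task becomes to replace this union over \emph{all} dyadic cubes by a sum over a stopping-time family $\mathcal S$ that still covers the level set up to a harmless dilation of $\lambda$ and is Carleson with respect to $\sigma$, i.e.\ $\sum_{Q'\in\mathcal S,\,Q'\subseteq Q}\sigma(Q')\lesssim\sigma(Q)$. Granting such a family, the per-cube estimate and the dyadic weighted Carleson embedding theorem --- which holds precisely because $p>1$ --- give
\[
|\{w^{1/p}M^{\mathcal D}f>\lambda\}|\ \le\ \sum_{Q\in\mathcal S}\frac{C[w]_{A_p^*}}{\lambda^p}\,\sigma(Q)\,\bigl(\avg{g}^{\sigma}_Q\bigr)^p\ \lesssim\ \frac{[w]_{A_p^*}}{\lambda^p}\,\|g\|_{L^p(\sigma)}^p\ =\ \frac{[w]_{A_p^*}}{\lambda^p}\,\|f\|_{L^p(w)}^p,
\]
which is \eqref{multiplier_M}. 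I would try to construct $\mathcal S$ by a double stopping time --- branching both when the $\sigma$-average of $g$ doubles and when the density $\sigma(Q)/|Q|$ grows --- so that on the region left between a stopping cube $Q$ and its children one has $\avg{f}_R\lesssim\avg{f}_Q$ for every $R$ reaching down from $Q$, forcing $M^{\mathcal D}f$ to be captured genuinely scale by scale.

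The main obstacle, and the reason the problem resisted for so long, is that $\{w^{1/p}M^{\mathcal D}f>\lambda\}$ is \emph{not} a level set of $M^{\mathcal D}f$: a point enters as soon as $M^{\mathcal D}f(x)$ exceeds the $x$-dependent threshold $\lambda w(x)^{-1/p}$, so the set cannot be covered by a single disjoint Calder\'on--Zygmund family. Decomposing instead by the dyadic size of $w$ (or of $M^{\mathcal D}f$) overcounts each point by a factor $\sim\log\bigl(w^{1/p}M^{\mathcal D}f/\lambda\bigr)$ and makes the scale sum diverge, and every naive recursion down the dyadic tree shares this defect because the children of a cube carry the full $\sigma$-mass of their parent. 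The crux is therefore to arrange the stopping time so that it simultaneously captures $M^{\mathcal D}f$ pointwise and yields a family that is $\sigma$-Carleson and not merely Lebesgue-sparse; here the weak-$L^1$ --- rather than full $L^1$ --- nature of the $A_p^*$ average is exactly what should allow the super-level sets $\{w>t\}\cap Q$ arising along the way to be absorbed. Verifying the $\sigma$-Carleson property of $\mathcal S$ is where I expect the real fight to be; the fractional case ($M_\alpha$, $A_{p,q}^*$) should then follow from the same scheme with the obvious modifications to the exponents.
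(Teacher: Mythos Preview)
Your necessity argument is correct and matches the paper's. For sufficiency, however, there is a genuine gap: you correctly isolate the difficulty --- a Lebesgue-sparse family need not be $\sigma$-Carleson --- but the double stopping time you propose does not close it. Stopping when $\sigma(Q')/|Q'|$ doubles yields only $\sum|Q'|\le\tfrac12|Q|$, which says nothing about $\sum\sigma(Q')$ unless $\sigma$ is already known to lie in $A_\infty$; and stopping on $\langle g\rangle^\sigma_Q$ alone does not capture $M^{\mathcal D}f$ pointwise. Your instinct that ``the weak-$L^1$ nature of the $A_p^*$ average'' is what helps is exactly right, but the mechanism is not a stopping time: it is the one-line Lorentz--H\"older estimate
\[
\|w^{1/s}\rchi_Q\|_{L^1}\ \le\ s'\,\|w\rchi_Q\|_{L^{1,\infty}}^{1/s}\,|Q|^{1-1/s},
\]
which together with Jensen on the $\sigma$-factor shows $w^{1/s}\in A_p$ for every $s>1$, hence $\sigma^{1/s}\in A_{p'}$, hence $\sigma\in A_\infty$. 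This is the paper's Lemma~\ref{main_lemma}, and it is the sole new ingredient; once you have it, any Lebesgue-sparse family is automatically $\sigma$-Carleson and your Carleson-embedding endgame goes through verbatim.

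The paper also sidesteps the obstacle you flag --- that $\{w^{1/p}M^{\mathcal D}f>\lambda\}$ is not a level set of $M^{\mathcal D}f$ --- by a simpler decomposition than your double stopping time. Rather than fix $\lambda$ and try to cover its super-level set, it writes $\|w^{1/p}M^{\mathcal D}f\|_{L^{p,\infty}}^p=\|w(M^{\mathcal D}f)^p\|_{L^{1,\infty}}$ and slices by the level sets $\Omega_k=\{M^{\mathcal D}f>a^k\}$ of $M^{\mathcal D}f$ \emph{alone}: on $\Omega_k\setminus\Omega_{k+1}$ one has $M^{\mathcal D}f\approx a^k$, so for every $\lambda$ the termwise bound $\lambda|\{x\in Q:wa^{kp}>\lambda\}|\le a^{kp}\|w\rchi_Q\|_{L^{1,\infty}}$ gives
\[
\|w(M^{\mathcal D}f)^p\|_{L^{1,\infty}}\ \lesssim\ \sum_{Q\in\mathcal S}\langle f\rangle_Q^p\,\|w\rchi_Q\|_{L^{1,\infty}}
\]
over the standard Calder\'on--Zygmund cubes $\mathcal S=\bigcup_k\{Q_{k,j}\}$. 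The $A_p^*$ definition converts each summand to $[w]_{A_p^*}\langle f\sigma^{-1}\rangle_{\sigma,Q}^p\,\sigma(Q)$; since $\mathcal S$ is Lebesgue-sparse and $\sigma\in A_\infty$ by Lemma~\ref{main_lemma}, one replaces $\sigma(Q)$ by $\sigma(E_Q)$ and finishes with the $L^p(\sigma)$-boundedness of $M_\sigma^{\mathcal D}$. No $x$-dependent thresholds enter, and the entire sufficiency argument fits on half a page once Lemma~\ref{main_lemma} is in hand.
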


\begin{theorem}\label{main_theorem_fractional}
    If $0 < \alpha < n$, $1 < p < \frac{n}{\alpha}$, $q$ such that $\frac{1}{p} - \frac{1}{q} = \frac{\alpha}{n}$, and $w$ a weight, then  
    \begin{equation}\label{multiplier_M_fractional}
        \|wM_\alpha f\|_{L^{q,\infty}} \lesssim \|f\|_{L^p(w^p)}
    \end{equation}
    for each $f \in L^p(w^p)$ if and only if $w \in A_{p, q}^*$.
\end{theorem}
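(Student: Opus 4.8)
The statement is a biconditional, and the plan is to prove the two implications separately; the necessity of $A_{p,q}^*$ is the soft part and its sufficiency the substantial one. For necessity, assume \eqref{multiplier_M_fractional}, fix a cube $Q$, set $\sigma := w^{-p'}$, and test with $f = \sigma\chi_Q$. For $x\in Q$ one has $M_\alpha f(x)\ge |Q|^{\alpha/n-1}\sigma(Q)$, while $\|f\|_{L^p(w^p)}^p = \int_Q\sigma^pw^p = \int_Q\sigma = \sigma(Q)$ since $\sigma^pw^p = w^{p-pp'} = w^{-p'} = \sigma$. Feeding this into the level-set form of \eqref{multiplier_M_fractional} on $Q$ and substituting $t = \lambda|Q|^{1-\alpha/n}\sigma(Q)^{-1}$ yields, upon taking the supremum over $t$, the bound $\|w\chi_Q\|_{L^{q,\infty}}\lesssim |Q|^{1-\alpha/n}\sigma(Q)^{-1/p'}$; since $1-\tfrac\alpha n-\tfrac1{p'} = \tfrac1q$ by the relation $\tfrac1p-\tfrac1q=\tfrac\alpha n$, this rearranges to $|Q|^{-1/q}\|w\chi_Q\|_{L^{q,\infty}}\big(\aver{Q}w^{-p'}\big)^{1/p'}\lesssim 1$, i.e.\ $w\in A_{p,q}^*$. (One may also test with $\sigma\chi_E$ for $E\subseteq Q$, but this happens to give nothing stronger.)

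For sufficiency, suppose $w\in A_{p,q}^*$. After the usual reductions — replacing $M_\alpha$ by a dyadic model using a bounded number of shifted dyadic lattices, truncating $f\ge 0$ so suprema are attained, and rescaling — it is enough to bound $\big|\{M_\alpha^{\mathrm{dy}}f > w^{-1}\}\big|$ by $[w]_{A_{p,q}^*}^q\big(\int f^pw^p\big)^{q/p}$. For each $x$ in this set select the \emph{largest} dyadic cube $Q=Q(x)\ni x$ with $|Q|^{\alpha/n}\aver{Q}f > w(x)^{-1}$, and let $\mathcal Q$ be the collection of these cubes. Comparison with the dyadic parent pins the weight on the piece $E_Q := \{x : Q(x)=Q\}$: there $w(x)\asymp (|Q|^{\alpha/n}\aver{Q}f)^{-1}$, with constants depending only on $n,\alpha$, and the $E_Q$ partition the level set. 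Since $E_Q\subseteq Q\cap\{w>c(|Q|^{\alpha/n}\aver{Q}f)^{-1}\}$, the weak-$L^q$ bound on $w\chi_Q$ coming from $A_{p,q}^*$, combined with Hölder's inequality $\aver{Q}f\le |Q|^{-1}\big(\int_Q f^pw^p\big)^{1/p}\sigma(Q)^{1/p'}$ and the cancellation of exponents forced by $\tfrac1p-\tfrac1q=\tfrac\alpha n$, produces the clean per-piece estimate $|E_Q|\lesssim [w]_{A_{p,q}^*}^q\big(\int_Q f^pw^p\big)^{q/p}$.

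Summing over $Q\in\mathcal Q$ leaves the decisive point: one must show $\sum_{Q\in\mathcal Q}\big(\int_Q f^pw^p\big)^{q/p}\lesssim\big(\int f^pw^p\big)^{q/p}$, and this is the heart of the matter. Because $q>p$ we have $\sum_i a_i^{q/p}\le(\sum_i a_i)^{q/p}$, so once the cubes are organized into a family of controlled overlap it suffices to get $\sum_{Q}\int_Q f^pw^p\lesssim\int f^pw^p$; the obstacle is that the naive ``largest-cube'' family $\mathcal Q$ need not be sparse. I expect the resolution to exploit the level-pinning: group $\mathcal Q$ according to the dyadic value of $|Q|^{\alpha/n}\aver{Q}f$, pass within each group to the pairwise-disjoint maximal cubes, and use that the associated weight-windows $\{w\asymp 2^{-m}\}$ overlap boundedly in $m$. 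The delicate step — which I anticipate will be the main difficulty, and the place where both the strict fractional range $q>p$ and the weak-type nature of $A_{p,q}^*$ are genuinely used — is to keep the $L^p(w^p)$-mass of $f$ accounted for in the correct window rather than on the whole cube, so that the sum over scales does not double count; equivalently, to replace $\mathcal Q$ by a genuinely sparse (Carleson) subfamily adapted to the pair $(f,w)$. Specialized to $\alpha=0$ this argument recovers the proof of Theorem~\ref{main_theorem}.
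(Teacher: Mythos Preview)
Your necessity argument is essentially the paper's: test with $f=\sigma\rchi_Q$, $\sigma=w^{-p'}$, and unwind. (You skip the degenerate cases $\sigma(Q)\in\{0,\infty\}$, but these are routine.)

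The sufficiency argument, however, has a genuine gap, and you identify it yourself: the sum $\sum_{Q\in\mathcal Q}\big(\int_Q f^pw^p\big)^{q/p}$ is not controlled. Your speculative plan does not close it. First, the ``pinning'' claim is only half true: maximality of $Q(x)$ gives the lower bound $w(x)>(|Q|^{\alpha/n}\avg{f}_Q)^{-1}$, but the upper bound coming from the parent is $w(x)\le(|\hat Q|^{\alpha/n}\avg{f}_{\hat Q})^{-1}$, and $\avg{f}_{\hat Q}$ can be arbitrarily smaller than $\avg{f}_Q$, so there is no two-sided pinning in terms of $Q$'s own data. Second, even granting a grouping into windows with pairwise-disjoint maximal cubes inside each window, summing over windows reintroduces the overlap: nothing forces $\int_Q f^pw^p$ to be carried by the correct $w$-band, so the mass is counted once per window. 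The inequality $\sum a_i^{q/p}\le(\sum a_i)^{q/p}$ only helps once you already have $\sum_Q\int_Q f^pw^p\lesssim\int f^pw^p$, and that is exactly what fails for a nested family.

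The paper's route is structurally different and bypasses this obstacle. Instead of selecting a stopping cube per point, it runs the standard Calder\'on--Zygmund decomposition of the level sets $\{M_\alpha^{\mathcal D}f>a^k\}$, producing a single sparse family $\{Q_{k,j}\}$ with disjoint pieces $E_{k,j}$ satisfying $|Q_{k,j}|\le 2|E_{k,j}|$. The weak-$L^1$ norm $\|w^q\rchi_{Q_{k,j}}\|_{L^{1,\infty}}$ is bounded using $A_{p,q}^*$, which converts everything to $\sigma$-averages; the resulting sum is $\sum_{k,j}\big(\sigma(Q_{k,j})^{\alpha/n}\avg{f\sigma^{-1}}_{\sigma,Q_{k,j}}\big)^q\sigma(Q_{k,j})$. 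The decisive step --- the one missing from your outline --- is Lemma~\ref{main_lemma_fractional}: the $A_{p,q}^*$ condition alone forces $\sigma=w^{-p'}$ to satisfy a reverse H\"older / $A_\infty$-type inequality $(|E|/|Q|)^{2p'}\lesssim\sigma(E)/\sigma(Q)$, whence $\sigma(Q_{k,j})\lesssim\sigma(E_{k,j})$. This transfers Lebesgue-sparseness to $\sigma$-sparseness, the sum collapses into $\int (M_{\alpha,\sigma}^{\mathcal D}(f\sigma^{-1}))^q\sigma$, and one finishes with the unweighted bound $M_{\alpha,\sigma}^{\mathcal D}:L^p(\sigma)\to L^q(\sigma)$. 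The $RH$ lemma for $\sigma$ is the substantive new ingredient; without it (or an equivalent), your summing step cannot be completed.
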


The remainder of this paper is organized as follows. In Section~\ref{section:preliminaries}, we define the operators of interest, including the Hardy-Littlewood maximal operator and the fractional maximal operators, and recall the Muckenhoupt $A_p$ and $A_{p,q}$ weight classes. We also present the multiplier classes, $A_p^*$ and $A_{p,q}^*$, and explore some of their properties. In Section~\ref{section:necessary}, we establish the necessity of the $A_p^*$ and $A_{p,q}^*$ conditions for a weight to satisfy a multiplier weak-type inequality for these operators. Lastly, in Section~\ref{section:sufficiency}, we prove the corresponding sufficiency results for these conditions, providing a complete characterization of the weights that satisfy the desired multiplier weak-type inequalities when $p > 1$.

\section{Preliminaries}\label{section:preliminaries}
Throughout this paper, $n$ will denote the dimension of the domain $\rn$ of all functions. By $C$, $c$, etc. we will mean constants that depend only on underlying parameters but may otherwise change from line to line. If we write $A\lesssim B$, we mean that there exists $c>0$ such that $A\leq cB$. If $A\lesssim B$ and $B\lesssim A$, we write $A\approx B$. By a cube $Q$ we mean a cube in $\rn$ whose sides are parallel to the coordinate axes. Given a cube $Q$, function $f$, and weight $w$, we denote the integral average of $f$ on $Q$ with respect to the measure $w\,dx$ as
\[
    \avgint_Q f(x)\,w(x)\,dx := \frac{1}{w(Q)}\int_Qf(x)w(x)\,dx,
\]
where $w(Q) := \int_Qw(x)\,dx$. We will sometimes use the notation $\avg{f}_{w,Q}$. When $w \equiv 1$, so that $w\,dx$ is Lebesgue measure, we drop the first subscript and write $\avg{f}_{Q}$. Given $1 \leq p < \infty$, the weak $L^p$ space, denoted $L^{p,\infty}$, is the quasi-Banach function space with quasi-norm
\[
    \|f\|_{L^{p,\infty}} := \sup_{\lambda > 0}\lambda |\{x \in \mathbb{R}^n : |f(x)| > \lambda \}|^\frac{1}{p}.
\]
More generally, for $0 < p,q < \infty$, we consider the Lorentz space $L^{p,q}$ (see \cite{MR3243734}),
\[
    \|f\|_{L^{p,q}} := p^\frac{1}{q}\left(\int_0^\infty[d_f(\lambda)^\frac{1}{p}\lambda]^q\,\frac{d\lambda}{\lambda}\right)^\frac{1}{q},
\]
where $d_f(\lambda) := |\{x \in \mathbb{R}^n : |f(x)| > \lambda \}|$ denotes the distribution function of $f$. Observe that $\|f\|_{L^{p,p}} = \|f\|_{L^p}$ for any $p > 0$. Let $0 < r < \infty$, we will need the following result, which follows from the identity $d_{|f|^r}(\lambda) = d_f(\lambda^\frac{1}{r})$ and a change of variable,  
\[
    \||f|^r\|_{L^{p,q}} = \|f\|_{L^{pr,qr}}^r.
\]
\subsection{Maximal operators and Muckenhoupt weights}
We will be working with the following maximal operators. For more information, see~\cite{dcu-paseky, CruzUribe:2016ji}. In what follows, supremums are to be taken over all cubes $Q$. Given $f\in L^1_{loc}$, we define the Hardy-Littlewood maximal operator by
\[ Mf(x) := \sup_Q \avgint_Q |f(y)|\,dy \cdot \rchi_Q(x). \]
When $p > 1$, it is a classical result that the maximal operator is bounded on $L^p(w)$ if and only if the weight $w$ belongs to the Muckenhoupt class $A_p$, written $w \in A_p$. This class consists of those locally-integrable weights for which
\[
  [w]_{A_p} := \sup_Q \left(\avgint_Q{w(x)\,dx}\right) \left(\avgint_Q{w(x)^{1-p'}\,dx}\right)^{p-1} < \infty. 
\]
When $p = 1$, the Hardy-Littlewood maximal operator is bounded from $L^1(w)$ to $L^{1,\infty}(w)$ provided the weight $w$ belongs to the Muckenhoupt class $A_1$, written $w \in A_1$. This class consists of those weights for which
\[ [w]_{A_1} := \sup_Q \left(\avgint_Q{w(x)\,dx}\right)\esssup_{x\in Q} w(x)^{-1} < \infty. \]
Similarly, given $0<\alpha<n$ and $f\in L^1_{loc}$,  we define the fractional maximal operator by
\[ M_\alpha f(x) := \sup_Q |Q|^{\frac{\alpha}{n}} \avgint_Q |f(y)|\,dy \cdot \rchi_Q(x). \]
When $1 < p < \frac{n}{\alpha}$ and $q$ is such that $\frac{1}{p} - \frac{1}{q} = \frac{\alpha}{n}$, it is a well-known result \cite{MR340523} that the fractional maximal operator is bounded from $L^p(w^p)$ to $L^q(w^q)$ if and only if the weight $w$ belongs to the Muckenhoupt class $A_{p,q}$, written $w \in A_{p,q}$. This class consists of those weights for which
\[ [w]_{A_{p,q}} := \sup_Q \left(\avgint_Q{w(x)^q\,dx}\right)^\frac{1}{q} \left(\avgint_Q{w(x)^{-p'}\,dx}\right)^{\frac{1}{p'}} < \infty. \]
When $p = 1$ and $q = \frac{n}{n-\alpha}$, the fractional maximal operator is bounded from $L^1(w)$ to $L^{q,\infty}(w^q)$ provided the weight $w$ belongs to the Muckenhoupt class $A_{1,q}$, written $w \in A_{1,q}$. This class consists of those weights for which
\[ [w]_{A_{1,q}} := \sup_Q \left(\avgint_Q{w^q(x)\,dx}\right)^{\frac{1}{q}}\esssup_{x\in Q} w(x)^{-1} < \infty. \]
Define the overarching class $A_\infty := \bigcup_{p\geq 1}A_p$. A weight $w\in A_\infty$ if and only if $w$ satisfies the reverse H\"older inequality for some exponent $r>1$, written $w\in RH_r$. When the exponent is not important, we simply write $RH$. The class $RH_r$ consists of those weights for which 
\[ [w]_{RH_r} := \sup_Q \bigg(\avgint_Q w(x)^r\,dx\bigg)^{\frac{1}{r}}
  \bigg(\avgint_Q w(x)\,dx\bigg)^{-1} < \infty. \]
Directly from the definition of the class $A_{p,q}$, we see $w \in A_{p,q}$ if and only if $w^{q} \in A_{1 + \frac{q}{p'}}$. Therefore, by Jensen's inequality, $A_{p,q} \subseteq A_\infty$. Moreover, we see that the fractional maximal operators generalizes the Hardy-Littlewood maximal operator in the sense that $M_\alpha = M$ for $\alpha = 0$. Lastly, given a weight $w$, we can define the weighted maximal operator
\[ M_wf(x) := \sup_Q \frac{1}{w(Q)}\int_Q|f(y)|w(y)\,dy \cdot \rchi_Q(x). \]
By standard arguments, under mild conditions on $w$, we have $M_w$ is bounded on $L^p(w)$ when $p > 1$, and from $L^1(w)$ to $L^{1,\infty}(w)$. Moreover, when restricting to dyadic cubes (see below), the operator norm is independent of $w$. The analogous results holds for the weighted fractional maximal operators, denoted $M_{\alpha, w}$.

\subsection{Dyadic lattices and sparse cubes}
We will also need the dyadic variants of both the Hardy-Littlewood maximal operator, $M^D$, and the fractional maximal operator, $M_\alpha^D$, where the supremums in their respective definitions are restricted to cubes belonging to a fixed dyadic lattice $\mathcal{D}$. We follow the presentation in \cite{MR4007575}. For a cube $Q$, let $\mathcal{D}(Q)$ denote its standard dyadic lattice formed by taking successive bisections of its sides with $(n-1)$-dimensional hyperplanes. By a dyadic lattice $\mathcal{D}$ on $\mathbb{R}^n$, we mean a collection of cubes satisfying:
\begin{enumerate}\setlength\itemsep{.5em}
    \item If $Q \in \mathcal{D}$ then $\mathcal{D}(Q) \subset \mathcal{D}$.
    \item If $Q, Q' \in \mathcal{D}$ then there exists $Q'' \in \mathcal{D}$ for which $Q, Q' \subset Q''$.
    \item If $K \subset \mathbb{R}^n$ compact then there exists $Q \in \mathcal{D}$ for which $K \subseteq Q$.
\end{enumerate}
These dyadic maximal operators are point-wise equivalent to their non-dyadic counterparts in the sense that for any $f \in L^1_{loc}$ we have $Mf(x) \approx M^Df(x)$ and $M_{\alpha}f(x) \approx M_{\alpha}^Df(x)$ for a.e. $x \in \mathbb{R}^n$. Lastly, for a fixed dyadic lattice $\mathcal{D}$, we say a subcollection of cubes $\mathcal{S} := \{Q_j\} \subseteq \mathcal{D}$ is sparse if there exists an associated collection of measurable sets $\{E_{Q_j}\}$ satisfying:
\[
    E_{Q_j} \subseteq Q_j,\qquad E_{Q_i} \cap E_{Q_j} = \emptyset \text{ if } i \neq j,\qquad |Q_j| \leq 2|E_{Q_j}|.
\]

\subsection{New weight classes} We now introduce the multiplier $A_p$ and $A_{p,q}$ classes when $p > 1$. In what follows, supremums are to be taken over all cubes $Q$. 

\begin{definition} 
    For $1 < p < \infty$, we say $w$ is a multiplier $A_p$ weight, written $w \in A_p^*$, if 
    \[ [w]_{A_p^*} := \sup_{Q}\frac{1}{|Q|}\left\|w \rchi_Q\right\|_{L^{1,\infty}}\left(\avgint_Q{w^{1-p'}}\,dx\right)^{p-1} < \infty. \]
\end{definition}

\begin{definition} 
    For $1 < p < \infty$, we say $w$ is a multiplier $A_{p,q}$ weight, written $w \in A_{p,q}^*$, if 
    \[ [w]_{A_{p,q}^*} := \sup_{Q}\left(\frac{1}{|Q|}\|w^q \rchi_Q\|_{L^{1,\infty}}\right)^\frac{1}{q}\left(\avgint_Q{w^{-p'}}\,dx\right)^\frac{1}{p'} < \infty. \]
\end{definition}
In \cite{MR447956}, it was noted that the $A_p^*$ condition is equivalent to the following when $p > 1$:
\[
    [w]'_{A_p^*} := \sup_{Q}\left\|\frac{w(x)|Q|^{p-1}}{|Q|^p + |x - x_Q|^p}\right\|_{L^{1,\infty}}\left(\avgint_Q{w^{1-p'}}\,dx\right)^{p-1} < \infty.
\]
It is clear that for any cube $Q$ we have
\[
    \left\|\frac{w(x)|Q|^{p-1}}{|Q|^p + |x - x_Q|^p}\right\|_{L^{1,\infty}} \lesssim \frac{1}{|Q|}\left\|w\rchi_Q\right\|_{L^{1,\infty}}.
\]
The opposite inequality follows from the following $RH$ property of $\sigma := w^{1-p'}$ 

\begin{lemma}\label{main_lemma}
    Let $p > 1$, $w \in A_p^*$ and $\sigma := w^{1-p'}$. Then for any $s > 1$, we have $w^\frac{1}{s} \in A_p$. Moreover, there exists a constant $[\sigma]_{RH}$ such that for any cube $Q$ and any measurable subset $E \subseteq Q$ with $\sigma(E), \sigma(Q) > 0$, we have 
    \[
        \left(\frac{|E|}{|Q|}\right)^{2p'} \leq c[\sigma]_{RH}\left(\frac{\sigma(E)}{\sigma(Q)}\right),
    \]
    where $c$ is an absolute constant that only depends on $p$. In particular, this gives $\sigma \in RH$.
\end{lemma}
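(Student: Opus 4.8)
The plan is to prove the three assertions in turn, the key new input being that the weak‑$L^1$ quantity in the definition of $A_p^*$ can be tamed either by raising $w$ to a power $1/s>1$ (which turns it into an honest $L^1$ average) or by splitting at a level and optimizing.

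\textbf{Step 1: $w^{1/s}\in A_p$ for $s>1$.} Fix a cube $Q$. By the scaling identity $\||f|^r\|_{L^{p,q}}=\|f\|_{L^{pr,qr}}^r$ applied to $f=w^{1/s}\chi_Q$ with $r=s$, $(p,q)=(1,\infty)$, one has $\|w^{1/s}\chi_Q\|_{L^{s,\infty}}=\|w\chi_Q\|_{L^{1,\infty}}^{1/s}$; and since $s>1$ a one‑line distribution‑function estimate gives, for $g\ge 0$, the embedding $\int_Q g\le\frac{s}{s-1}|Q|^{1-1/s}\|g\chi_Q\|_{L^{s,\infty}}$. Hence
\[
 \avgint_Q w^{1/s}\,dx\ \le\ \frac{s}{s-1}\Big(\tfrac{1}{|Q|}\,\|w\chi_Q\|_{L^{1,\infty}}\Big)^{1/s}.
\]
For the conjugate factor, $(w^{1/s})^{1-p'}=\sigma^{1/s}$ and Jensen's inequality (as $1/s<1$) gives $\avgint_Q\sigma^{1/s}\le\big(\avgint_Q\sigma\big)^{1/s}$; multiplying the two bounds (the second raised to $p-1$),
\[
 [w^{1/s}]_{A_p}\ \le\ \frac{s}{s-1}\,[w]_{A_p^*}^{1/s}\ <\ \infty.
\]

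\textbf{Step 2: the reverse‑Hölder‑type estimate.} Fix $Q$ and a measurable $E\subseteq Q$; the inequality is trivial unless $0<|E|$ and $\sigma(E)<\infty$, and for nontrivial $w\in A_p^*$ we may assume $w<\infty$ a.e.\ on $Q$, so $\sigma>0$ a.e. Here the naive split $|E|=\int_E w^{1/p}\sigma^{1/p'}$ via Hölder is unavailable because $w$ is only weak‑$L^1$; instead split at a height: for $\lambda>0$,
\[
 |E|\ \le\ \big|E\cap\{w>\lambda\}\big|+\big|E\cap\{w\le\lambda\}\big|\ \le\ \frac{\|w\chi_Q\|_{L^{1,\infty}}}{\lambda}\ +\ \lambda^{\frac{1}{p-1}}\sigma(E),
\]
using the weak norm for the first term and $\sigma=w^{-1/(p-1)}\ge\lambda^{-1/(p-1)}$ on $\{w\le\lambda\}$ for the second. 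The definition of $A_p^*$ gives $\|w\chi_Q\|_{L^{1,\infty}}\le[w]_{A_p^*}|Q|^p\sigma(Q)^{1-p}$. Minimizing the resulting bound over $\lambda>0$ — the optimal $\lambda$ is a power $1/p'$ of the ratio of the two constants, and the exponents combine cleanly because $1$ and $\tfrac1{p-1}$ are conjugate through $p'$ — yields
\[
 \frac{|E|}{|Q|}\ \le\ c_p\,[w]_{A_p^*}^{1/p}\Big(\frac{\sigma(E)}{\sigma(Q)}\Big)^{1/p'}.
\]
Raising to the power $p'$ (hence a fortiori to $2p'$, since $|E|\le|Q|$) gives the claimed inequality, with $c$ depending only on $p$ and $[\sigma]_{RH}\approx[w]_{A_p^*}^{p'/p}$.

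\textbf{Step 3: $\sigma\in RH$.} The estimate just proved says precisely that Lebesgue measure satisfies an $A_\infty$‑type condition with respect to $\sigma\,dx$: a subset of $Q$ with small $\sigma$‑measure relative to $\sigma(Q)$ has small Lebesgue measure relative to $|Q|$, with a power gain. By the standard, symmetric characterizations of $A_\infty$ this forces $\sigma\in A_\infty=\bigcup_{p\ge1}A_p$, and therefore $\sigma$ satisfies a reverse Hölder inequality, i.e.\ $\sigma\in RH$. (One can also read this off Step 1: $\sigma^{1/s}=(w^{1/s})^{1-p'}\in A_{p'}\subset A_\infty$ for every $s>1$.) The one genuinely delicate point will be the weak‑$L^1$ handling — it is what forces the splitting‑and‑optimization in Step 2 rather than a one‑line Hölder argument, and the $L^{s,\infty}\hookrightarrow L^1$ reduction in Step 1 — together with the bookkeeping check that the degenerate configurations ($\sigma(E)$ or $\sigma(Q)$ equal to $0$ or $\infty$, or $w$ vanishing on a set of positive measure) are harmless.
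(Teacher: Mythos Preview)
Your proof is correct, and Step~1 matches the paper's argument essentially verbatim (Lorentz--H\"older / the $L^{s,\infty}\hookrightarrow L^1$ embedding plus Jensen). The real difference is in Step~2. The paper does \emph{not} use the level-set splitting you invoke; instead it runs H\"older with a fractional power, writing $\chi_E = w^{1/(ps)}\cdot w^{-1/(ps)}\chi_E$ and estimating $\avgint_Q w^{1/s}$ via the Step~1 bound and $\avgint_Q \sigma^{1/s}\chi_E$ via Jensen, then fixing $s=2$ at the end to produce the exponent $2p'$. Your optimization over the threshold $\lambda$ is a genuinely different and somewhat more direct route: it avoids introducing the auxiliary parameter $s$, uses only the weak-$L^1$ definition of $A_p^*$, and in fact yields the sharper exponent $p'$ (you correctly note the stated $2p'$ follows a fortiori). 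Your constant $[\sigma]_{RH}\approx[w]_{A_p^*}^{p'/p}=[w]_{A_p^*}^{p'-1}$ also agrees with the paper's. One minor caveat: the parenthetical alternative in your Step~3 --- that $\sigma^{1/s}\in A_{p'}$ for every $s>1$ forces $\sigma\in A_\infty$ --- is not quite self-contained, since the $A_{p'}$ constant of $\sigma^{1/s}$ blows up as $s\to1^+$; but your primary argument (the $A_\infty$ characterization via the measure comparison just established) is fine and is exactly what the paper intends by ``in particular.''
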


\begin{proof}
     Fix a cube $Q$ and $s > 1$ to be chosen later. By Hölder's inequality for Lorentz spaces, 
    \begin{align*}
        \|w^\frac{1}{s}\rchi_Q\|_{L^{1,1}} &\leq \|w^\frac{1}{s}\rchi_Q\|_{L^{s,\infty}}\|\rchi_Q\|_{L^{s',1}} = s'\|w^\frac{1}{s}\rchi_Q\|_{L^{s,\infty}}|Q|^{1-\frac{1}{s}}.
    \end{align*}
    It follows that $w^\frac{1}{s}$ is locally integrable and 
    \[
        \avgint_Qw^\frac{1}{s}\,dx \leq s'\left(\frac{1}{|Q|}\|w\rchi_Q\|_{L^{1,\infty}}\right)^\frac{1}{s}.
    \]
    Therefore, by Jensens's inequality, we have  
    \begin{align*}
        \left(\avgint_Q w^\frac{1}{s}\,dx\right)\left(\avgint_Q \sigma^\frac{1}{s}\,dx\right)^{p-1} &\leq s'\left(\frac{1}{|Q|}\|w\rchi_Q\|_{L^{1,\infty}}\right)^\frac{1}{s}\left(\avgint_Q \sigma\,dx\right)^\frac{p-1}{s} = s'[w]_{A_p^*}^\frac{1}{s}.
    \end{align*}
    Hence, $w^\frac{1}{s} \in A_p$ and $[w^\frac{1}{s}]_{A_p} \leq s'[w]_{A_p^*}^\frac{1}{s}$. Let $E \subseteq Q$, measurable. By Hölder's inequality, 
    \begin{align*}
        \frac{|E|}{|Q|} &= \avgint_Q w^\frac{1}{ps}w^{-\frac{1}{ps}}\rchi_E\,dx\\
        &\leq \left(\avgint_Q w^\frac{1}{s}\,dx\right)^\frac{1}{p}\left(\avgint_Q \sigma^\frac{1}{s}\rchi_E\,dx\right)^\frac{1}{p'}\\
        &\leq (s')^\frac{1}{p}\left(\frac{1}{|Q|}\|w\rchi_Q\|_{L^{1,\infty}}\right)^\frac{1}{sp}\left(\avgint_Q \sigma\rchi_E\,dx\right)^\frac{1}{sp'}\\
        &\leq [w]_{A_p^*}^\frac{1}{sp}(s')^\frac{1}{p}\left(\avgint_Q\sigma\rchi_E\,dx\right)^\frac{1}{sp'}\left(\avgint_Q\sigma\,dx\right)^{-\frac{1}{sp'}}.
        \end{align*}
    Taking the $sp'$ power of both sides and simplifying gives 
    \[
        \left(\frac{|E|}{|Q|}\right)^{sp'} \leq [w]_{A_p^*}^{p'-1}(s')^\frac{sp'}{p}\frac{\sigma(E)}{\sigma(Q)}.
    \]
    With $s = 2$, we get 
    \[
        \left(\frac{|E|}{|Q|}\right)^{2p'} \leq c[\sigma]_{RH}\left(\frac{\sigma(E)}{\sigma(Q)}\right).
    \]
    Taking $c = 4^\frac{p'}{p}$ and $[\sigma]_{RH} = [w]_{A_p^*}^{p'-1}$, gives our result.
\end{proof}

For weights in the class $A_{p,q}^*$, we have the following
\begin{lemma}\label{main_lemma_fractional}
    Let $p > 1$, $w \in A_{p,q}^*$ and $\sigma := w^{-p'}$. Then for any $s > 1$, we have $w^\frac{1}{s} \in A_{p,q}$. Moreover, there exists a constant $[\sigma]_{RH}$ such that for any cube $Q$ and any measurable subset $E \subseteq Q$ with $\sigma(E), \sigma(Q) > 0$, we have 
    \[
        \left(\frac{|E|}{|Q|}\right)^{2p'} \leq c[\sigma]_{RH}\left(\frac{\sigma(E)}{\sigma(Q)}\right),
    \]
    where $c$ is an absolute constant that only depends on $p$. In particular, this gives $\sigma \in RH$.
\end{lemma}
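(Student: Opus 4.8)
The proof runs parallel to that of Lemma~\ref{main_lemma}, with $w$ replaced by $w^q$ wherever the weak-$L^1$ quasinorm appears. (Equivalently, one can first record that $w\in A_{p,q}^*$ if and only if $w^q\in A_r^*$ for $r:=1+\tfrac{q}{p'}$, with $[w^q]_{A_r^*}=[w]_{A_{p,q}^*}^{q}$ --- this is immediate from the definitions together with the identity $(w^q)^{1-r'}=w^{-p'}=\sigma$ --- and then invoke Lemma~\ref{main_lemma} for the weight $w^q$, together with the standard equivalence ``$v\in A_{p,q}\iff v^q\in A_{1+q/p'}$'' applied to $v=w^{1/s}$.)

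First I would prove that $w^{1/s}\in A_{p,q}$ for every $s>1$. Fix a cube $Q$. Hölder's inequality for Lorentz spaces applied to $w^{q/s}\rchi_Q=(w^q\rchi_Q)^{1/s}$, exactly as in Lemma~\ref{main_lemma} but now starting from $\|w^q\rchi_Q\|_{L^{1,\infty}}$, gives
\[
    \avgint_Q w^{q/s}\,dx \;\leq\; s'\left(\frac{1}{|Q|}\|w^q\rchi_Q\|_{L^{1,\infty}}\right)^{1/s}.
\]
Combining this with Jensen's inequality $\avgint_Q w^{-p'/s}\,dx\le\big(\avgint_Q w^{-p'}\,dx\big)^{1/s}$ and the definition of $[w]_{A_{p,q}^*}$ yields $[w^{1/s}]_{A_{p,q}}\le (s')^{1/q}[w]_{A_{p,q}^*}^{1/s}<\infty$.

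For the reverse-Hölder estimate, fix a cube $Q$ and a measurable $E\subseteq Q$ with $\sigma(E),\sigma(Q)>0$, and write $\tfrac{|E|}{|Q|}=\avgint_Q w^{\beta}\,w^{-\beta}\rchi_E\,dx$, applying Hölder's inequality with the conjugate pair $\big(\tfrac{p'+q}{p'},\tfrac{p'+q}{q}\big)$ and $\beta:=\tfrac{qp'}{2(p'+q)}$ --- these choices are made precisely so that the two resulting factors are $\big(\avgint_Q w^{q/2}\,dx\big)^{p'/(p'+q)}$ and $\big(\avgint_Q\sigma^{1/2}\rchi_E\,dx\big)^{q/(p'+q)}$. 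Feeding in the displayed Lorentz bound (with $s=2$) for the first factor, Jensen's inequality for the second, and the estimate $\tfrac{1}{|Q|}\|w^q\rchi_Q\|_{L^{1,\infty}}\le[w]_{A_{p,q}^*}^{q}\big(\avgint_Q\sigma\,dx\big)^{-q/p'}$ coming from the $A_{p,q}^*$ condition, one obtains, after raising both sides to the power $\tfrac{2(p'+q)}{q}$,
\[
    \left(\frac{|E|}{|Q|}\right)^{\frac{2(p'+q)}{q}} \;\leq\; c\,[w]_{A_{p,q}^*}^{p'}\,\frac{\sigma(E)}{\sigma(Q)}.
\]
Since $\tfrac{1}{p'}+\tfrac{1}{q}=1-\tfrac{\alpha}{n}<1$ we have $\tfrac{2(p'+q)}{q}<2p'$, so, because $\tfrac{|E|}{|Q|}\le 1$, the left-hand side dominates $\big(\tfrac{|E|}{|Q|}\big)^{2p'}$; this gives the asserted inequality with $[\sigma]_{RH}:=[w]_{A_{p,q}^*}^{p'}$ and an absolute constant $c$ depending only on $p$ (one checks $c=4^{p'}$ works). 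As in Lemma~\ref{main_lemma}, an inequality of this form valid for all measurable $E\subseteq Q$ places $\sigma$ in $A_\infty=\bigcup_{r>1}RH_r$, hence $\sigma\in RH$.

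The argument is routine bookkeeping; the only two points that need genuine attention are (i) the choice of the Hölder exponents and the power $\beta$, so that the two averages come out exactly as $\avgint_Q w^{q/2}\,dx$ and $\avgint_Q\sigma^{1/2}\rchi_E\,dx$ (this is what forces the shift from the exponent $p'$ in Lemma~\ref{main_lemma} to the pair $\tfrac{p'+q}{p'},\tfrac{p'+q}{q}$ here), and (ii) the final step of enlarging the exponent on the left-hand side from $\tfrac{2(p'+q)}{q}$ up to $2p'$, which is exactly where the hypothesis $\alpha>0$ (equivalently $q>p$) is used. Neither constitutes a real obstacle.
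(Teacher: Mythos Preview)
Your proof is correct and follows essentially the same strategy as the paper's: bound $\avgint_Q w^{q/s}$ via H\"older for Lorentz spaces, then split $|E|/|Q|$ by H\"older into a $w$-part and a $\sigma$-part and invoke the $A_{p,q}^*$ condition. The only organizational difference is that the paper applies H\"older with the pair $(p,p')$ and then uses the power-mean inequality $(\avgint_Q w^{p/s})^{1/p}\le(\avgint_Q w^{q/s})^{1/q}$ (this is where $q>p$ enters) to arrive at the exponent $2p'$ directly, whereas you choose the H\"older pair $\big(\tfrac{p'+q}{p'},\tfrac{p'+q}{q}\big)$ to hit $w^{q/2}$ and $\sigma^{1/2}$ on the nose and instead use $q>p$ at the end to enlarge the exponent from $\tfrac{2(p'+q)}{q}$ to $2p'$; the resulting constants are the same.
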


\begin{proof}
     Fix a cube $Q$ and $s > 1$ to be chosen later. By Hölder's inequality for Lorentz spaces, 
    \begin{align*}
        \|w^\frac{q}{s}\rchi_Q\|_{L^{1,1}} &\leq \|w^\frac{q}{s}\rchi_Q\|_{L^{s,\infty}}\|\rchi_Q\|_{L^{s',1}} = s'\|w^\frac{q}{s}\rchi_Q\|_{L^{s,\infty}}|Q|^{1-\frac{1}{s}}.
    \end{align*}
    It follows that $w^\frac{q}{s}$ is locally integrable and 
    \[
        \avgint_Qw^\frac{q}{s}\,dx \leq s'\left(\frac{1}{|Q|}\|w^q\rchi_Q\|_{L^{1,\infty}}\right)^\frac{1}{s}.
    \]
    Therefore, by Jensens's inequality, we have  
    \begin{align*}
        \left(\avgint_Q w^\frac{q}{s}\,dx\right)^\frac{1}{q}\left(\avgint_Q \sigma^\frac{1}{s}\,dx\right)^\frac{1}{p'} &\leq s'\left(\frac{1}{|Q|}\|w^q\rchi_Q\|_{L^{1,\infty}}\right)^\frac{1}{qs}\left(\avgint_Q \sigma\,dx\right)^\frac{1}{sp'} = s'[w]_{A_{p,q}^*}^\frac{1}{s}.
    \end{align*}
    Hence, $w^\frac{1}{s} \in A_{p,q}$ and $[w^\frac{1}{s}]_{A_{p,q}} \leq s'[w]_{A_{p,q}^*}^\frac{1}{s}$. Let $E \subseteq Q$, measurable. By Hölder's inequality, 
    \begin{align*}
        \frac{|E|}{|Q|} &= \avgint_Q w^\frac{1}{s}w^{-\frac{1}{s}}\rchi_E\,dx\\
        &\leq \left(\avgint_Q w^\frac{p}{s}\,dx\right)^\frac{1}{p}\left(\avgint_Q \sigma^\frac{1}{s}\rchi_E\,dx\right)^\frac{1}{p'}\\
        &\leq \left(\avgint_Q w^\frac{q}{s}\,dx\right)^\frac{1}{q}\left(\avgint_Q \sigma^\frac{1}{s}\rchi_E\,dx\right)^\frac{1}{p'}\\
        &\leq (s')^\frac{1}{q}\left(\frac{1}{|Q|}\|w^q\rchi_Q\|_{L^{1,\infty}}\right)^\frac{1}{sq}\left(\avgint_Q \sigma \rchi_E\,dx\right)^\frac{1}{sp'}\\
        &\leq [w]^\frac{1}{s}_{A_{p,q}^*}(s')^\frac{1}{q}\left(\avgint_Q\sigma\rchi_E\,dx\right)^\frac{1}{sp'}\left(\avgint_Q\sigma\,dx\right)^{-\frac{1}{sp'}},
        \end{align*}
    where we used the fact that $p < q$. Taking the $sp'$ power of both sides and simplifying gives 
    \[
        \left(\frac{|E|}{|Q|}\right)^{sp'} \leq [w]_{A_{p,q}^*}^{p'}(s')^\frac{sp'}{q}\frac{\sigma(E)}{\sigma(Q)}.
    \]
    With $s = 2$, we get 
    \[
        \left(\frac{|E|}{|Q|}\right)^{2p'} \leq c[\sigma]_{RH}\left(\frac{\sigma(E)}{\sigma(Q)}\right).
    \]
    Taking $c = 4^{\frac{p'}{q}}$ and $[\sigma]_{RH} = [w]_{A_{p,q}^*}^{p'}$, gives our result.
\end{proof}

\section{Necessary Conditions}\label{section:necessary} 
This section concerns the necessity of the $A_p^*$ and $A_{p,q}^*$ conditions for a weight to satisfy a multiplier weak-type inequality for the Hardy-Littlewood maximal operator and fractional maximal operators, respectively, when $p > 1$. The case for the Hardy-Littlewood maximal operator was established in \cite{MR447956} for $1 \leq p < \infty$. We state the case for $p > 1$ below
\begin{theorem}{\cite[Theorem~6]{MR447956}} 
    If $1 < p < \infty$ and $w$ is a weight such that
    \begin{equation}\label{necessary_M}
        \|w^\frac{1}{p}Mf\|_{L^{p,\infty}} \lesssim \|f\|_{L^p(w)}
    \end{equation}
    for each $f \in L^p(w)$, then $w \in A_{p}^*$.
\end{theorem}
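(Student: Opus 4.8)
The plan is to run a testing argument on the natural family of near-extremizers $f = \sigma\rchi_Q$, where $\sigma := w^{1-p'}$ and $Q$ ranges over all cubes, reading off the $A_p^*$ bound for each $Q$ from \eqref{necessary_M}. Since $w$ is only assumed to be a weight, $\sigma$ need not be locally integrable a priori, so I would first work with the truncations $\sigma_N := \min(\sigma, N)$ and $f_N := \sigma_N\rchi_Q \in L^p(w)$, and pass to the limit $N \to \infty$ at the end; divergence of the resulting estimate when $\sigma(Q) = \infty$ is exactly what forces local integrability of $\sigma$.

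The core computation has three ingredients. First, for $x \in Q$ we have $Mf_N(x) \ge \avgint_Q \sigma_N\,dy = \sigma_N(Q)/|Q|$, so $w(x)^{1/p}Mf_N(x) \ge (\sigma_N(Q)/|Q|)\,w(x)^{1/p}$ pointwise on $Q$; taking $L^{p,\infty}$ quasi-norms and using the elementary identity $\|w^{1/p}\rchi_Q\|_{L^{p,\infty}} = \|w\rchi_Q\|_{L^{1,\infty}}^{1/p}$ (immediate from $d_{|g|^r}(\lambda) = d_g(\lambda^{1/r})$ and a change of variables), the hypothesis \eqref{necessary_M} gives
\[
    \frac{\sigma_N(Q)}{|Q|}\,\|w\rchi_Q\|_{L^{1,\infty}}^{1/p} \lesssim \|f_N\|_{L^p(w)} = \Bigl(\int_Q \sigma_N^p\,w\,dx\Bigr)^{1/p}.
\]
Second, the identity $p(1-p') + 1 = 1 - p'$ together with the definition of $\sigma_N$ yields the pointwise bound $\sigma_N^p w \le \sigma_N$: on $\{\sigma \le N\}$ both sides equal $w^{1-p'}$, while on $\{\sigma > N\} = \{w < N^{-(p-1)}\}$ the left side is $N^p w < N = \sigma_N$; hence $\int_Q \sigma_N^p w\,dx \le \sigma_N(Q)$. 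Third, once we know $0 < \sigma_N(Q) < \infty$ we may cancel a power of $\sigma_N(Q)$ to get $\sigma_N(Q)^{1/p'}|Q|^{-1}\|w\rchi_Q\|_{L^{1,\infty}}^{1/p} \lesssim 1$, and raising to the $p$-th power (using $p/p' = p-1$) rearranges this into
\[
    \frac{1}{|Q|}\|w\rchi_Q\|_{L^{1,\infty}}\Bigl(\avgint_Q \sigma_N\,dx\Bigr)^{p-1} \lesssim 1,
\]
with implicit constant independent of $Q$ and $N$. Letting $N \to \infty$ and applying monotone convergence ($\sigma_N(Q)\uparrow\sigma(Q)$) gives the $A_p^*$ inequality for $Q$; taking the supremum over $Q$ completes the proof.

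Two degenerate situations remain to be handled so that the cancellation and the limit are valid. If $|\{w = 0\}| > 0$ while $w$ is not a.e.\ zero, then testing \eqref{necessary_M} against a nonnegative $f$ supported in $\{w = 0\}$ forces $\|w^{1/p}Mf\|_{L^{p,\infty}} = 0$ despite $Mf > 0$ everywhere, a contradiction; so under the hypothesis either $w > 0$ a.e.\ or $w \equiv 0$ (the latter being trivial). Granting $w > 0$ a.e., each $\sigma_N(Q)$ lies in $(0, N|Q|]$, legitimizing the cancellation; and if $\sigma(Q) = \infty$ for some $Q$, the displayed inequality for that cube would force $\|w\rchi_Q\|_{L^{1,\infty}} = 0$, i.e.\ $w = 0$ a.e.\ on $Q$, contradicting $w > 0$ a.e. Hence $\sigma$ is automatically locally integrable.

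I do not expect a substantive obstacle: the argument reduces to the testing inequality, the rearrangement identity for $L^{p,\infty}$, and arithmetic with $p$ and $p'$, the only care being the truncation bookkeeping and the two degeneracy checks. This matches the general principle that necessity is the easy half of such a characterization, the real content of the paper lying in the sufficiency direction.
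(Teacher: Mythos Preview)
Your proposal is correct. The paper does not actually reprove this theorem---it cites \cite{MR447956}---so there is no proof in the paper to compare against directly. That said, the paper does prove the fractional analogue (Theorem~\ref{necessary_M_fractional}), and your argument is essentially the same testing strategy specialized to $\alpha=0$: plug in $f=\sigma\rchi_Q$, use $Mf\ge\avg{\sigma}_Q$ on $Q$, invoke the identity $\|w^{1/p}\rchi_Q\|_{L^{p,\infty}}=\|w\rchi_Q\|_{L^{1,\infty}}^{1/p}$, and rearrange.

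The only substantive difference is in the handling of the possible failure of local integrability of $\sigma$. The paper's fractional proof proceeds by a trichotomy on $\sigma(Q)$: the case $\sigma(Q)=\infty$ is dispatched by a duality argument (choose $g\in L^p(Q)$ with $gw^{-1}\notin L^1(Q)$, forcing $M_\alpha(gw^{-1})\equiv\infty$ and hence $w\equiv 0$). You instead truncate to $\sigma_N=\min(\sigma,N)$, verify the pointwise bound $\sigma_N^p w\le\sigma_N$, obtain the $A_p^*$ inequality uniformly in $N$, and let $N\to\infty$; the degenerate cases are absorbed into your preliminary dichotomy that $w>0$ a.e.\ or $w\equiv 0$. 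Both routes are standard and equally clean; yours has the slight advantage of keeping everything at the level of explicit test functions in $L^p(w)$, while the paper's trichotomy is marginally shorter once one is willing to invoke the $L^p$--$L^{p'}$ duality.
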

We prove the analogous result for the $A_{p,q}^*$ condition and the fractional maximal operator.
\begin{theorem}\label{necessary_M_fractional}
    Let $0 < \alpha < n$, $1 < p < \frac{n}{\alpha}$, and $q$ satisfy $\frac{1}{p} - \frac{1}{q} = \frac{\alpha}{n}$. If $w$ is a weight such that
    \begin{equation}\label{multiplier_M_alpha}
        \|wM_\alpha f\|_{L^{q,\infty}} \lesssim \|f\|_{L^p(w^p)}
    \end{equation}
    for each $f \in L^p(w^p)$, then $w \in A_{p,q}^*$.
\end{theorem}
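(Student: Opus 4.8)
\emph{Overview.} The plan is to argue \eqref{multiplier_M_alpha} $\Rightarrow$ $w\in A_{p,q}^*$ directly, by testing \eqref{multiplier_M_alpha} on a truncated family of functions adapted to an arbitrary cube $Q$ and reading off the $A_{p,q}^*$ bound for that cube. Write $\sigma:=w^{-p'}$ and record the identity $w^p=\sigma^{1-p}$. First I would dispose of degeneracy: if $w$ vanishes on a set $Z$ of positive measure, pick a ball $B$ with $|Z\cap B|>0$ and test \eqref{multiplier_M_alpha} on $f=\rchi_{Z\cap B}$; then $\|f\|_{L^p(w^p)}=0$ while $M_\alpha f(x)>0$ for every $x\in\rn$ (use a large cube through $x$ meeting $Z\cap B$), so the hypothesis forces $wM_\alpha f=0$ a.e.\ and hence $w=0$ a.e. Thus, the case $w\equiv 0$ being trivial, I may assume $w>0$ a.e., so that $\sigma<\infty$ a.e.\ and $\sigma(Q)>0$, $\|w^q\rchi_Q\|_{L^{1,\infty}}>0$ for every cube $Q$.

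\emph{The test function.} Fix a cube $Q$. For $k\in\N$ set $\sigma_k:=\min(\sigma,k)$ and $f_k:=\sigma_k\rchi_Q$. Using $w^p=\sigma^{1-p}$ and $\sigma_k\le\sigma$ one has $\sigma_k^pw^p=\sigma_k^p\sigma^{1-p}\le\sigma_k$ pointwise, so $f_k\in L^p(w^p)$ with $\|f_k\|_{L^p(w^p)}^p\le\sigma_k(Q)\le k|Q|<\infty$. Taking $Q$ itself in the supremum defining $M_\alpha$ gives $M_\alpha f_k(x)\ge |Q|^{\alpha/n-1}\sigma_k(Q)$ for $x\in Q$, so by the rescaling identity $\|g\|_{L^{q,\infty}}=\||g|^q\|_{L^{1,\infty}}^{1/q}$,
\[
  \|wM_\alpha f_k\|_{L^{q,\infty}}\ \ge\ |Q|^{\alpha/n-1}\,\sigma_k(Q)\,\|w\rchi_Q\|_{L^{q,\infty}}\ =\ |Q|^{\alpha/n-1}\,\sigma_k(Q)\,\|w^q\rchi_Q\|_{L^{1,\infty}}^{1/q}.
\]
Feeding this into \eqref{multiplier_M_alpha} and dividing by the finite positive number $\sigma_k(Q)^{1/p}$ yields $|Q|^{\alpha/n-1}\sigma_k(Q)^{1/p'}\|w^q\rchi_Q\|_{L^{1,\infty}}^{1/q}\lesssim 1$, uniformly in $k$ and $Q$.

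\emph{Passing to the limit and the arithmetic.} Letting $k\to\infty$, monotone convergence gives $\sigma_k(Q)\uparrow\sigma(Q)$; since the product stays bounded and $\|w^q\rchi_Q\|_{L^{1,\infty}}>0$, this forces $\sigma(Q)<\infty$ (in particular $w^{-p'}\in L^1_{loc}$) together with
\[
  |Q|^{\alpha/n-1}\,\sigma(Q)^{1/p'}\,\|w^q\rchi_Q\|_{L^{1,\infty}}^{1/q}\ \lesssim\ 1 .
\]
The balance relation $\tfrac1p-\tfrac1q=\tfrac\alpha n$ gives $\tfrac1q+\tfrac1{p'}=1-\tfrac\alpha n$, i.e.\ $\tfrac\alpha n-1=-\tfrac1q-\tfrac1{p'}$, so the left side equals $\bigl(\tfrac1{|Q|}\|w^q\rchi_Q\|_{L^{1,\infty}}\bigr)^{1/q}\bigl(\avgint_Q w^{-p'}\,dx\bigr)^{1/p'}$. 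Taking the supremum over all cubes gives $[w]_{A_{p,q}^*}\lesssim 1$, i.e.\ $w\in A_{p,q}^*$.

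\emph{Expected difficulty.} I do not anticipate a serious obstacle: this is the standard ``test on a bump adapted to $Q$'' scheme. The only points requiring care are (i) checking that $f_k$ genuinely lies in $L^p(w^p)$ with constant $\sigma_k(Q)$, which is exactly where $w^p=\sigma^{1-p}$ and the truncation enter; (ii) the degenerate case $w=0$ on a positive-measure set; and (iii) verifying that the factor $|Q|^{\alpha/n-1}$ produced by $M_\alpha$ matches precisely the normalizations $\tfrac1{|Q|}$ in $[w]_{A_{p,q}^*}$. Truncating by $\sigma_k$ is what removes any a priori need for local integrability of $w^{-p'}$.
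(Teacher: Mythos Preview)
Your proof is correct and follows essentially the same approach as the paper: test \eqref{multiplier_M_alpha} on $\sigma\rchi_Q$ (or its truncation) and rearrange using $\tfrac{\alpha}{n}-1=-\bigl(\tfrac{1}{q}+\tfrac{1}{p'}\bigr)$. The only difference is organizational---you handle degeneracy by first reducing to $w>0$ a.e.\ and then use the truncations $\sigma_k=\min(\sigma,k)$ to deduce $\sigma(Q)<\infty$ a posteriori, whereas the paper splits into the three cases $\sigma(Q)=0$, $\sigma(Q)=\infty$, and $0<\sigma(Q)<\infty$ and invokes a duality argument in the second.
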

    \begin{proof}
        Let $p > 1$ and suppose \eqref{multiplier_M_alpha} holds for each $f \in L^p(w^p)$. Fix a cube $Q$. Note, we need not assume, a priori, that $\sigma := w^{-p'}$ is locally integrable or even strictly positive. Suppose $\sigma(Q) = 0$, then under the convention $0 \cdot \infty = 0$, it follows that
        \[
            \left(\frac{1}{|Q|}\|w^q\rchi_Q\|_{L^{1,\infty}}\right)^\frac{1}{q}\left(\avgint_Qw^{-p'}\,dx\right)^\frac{1}{p'}= 0 < \infty.
        \]
        If $\sigma(Q) = \infty$, this implies $w^{-1}$ is not in $L^{p'}(Q)$. Consequently, there exists $g \in L^p(Q)$ for which $gw^{-1}$ is not in $L^1(Q)$. Therefore, $M_\alpha(gw^{-1}) = \infty$ for every $x \in \mathbb{R}^n$. Since $\|gw^{-1}\|_{L^p(w^p)} = \|g\|_{L^p} < \infty$, condition \eqref{multiplier_M} for $f = gw^{-1}$ implies that $w(x) = 0$ almost everywhere. Therefore, under the convention $0 \cdot \infty = 0$, it once again follows that
        \[
            \left(\frac{1}{|Q|}\|w^q\rchi_Q\|_{L^{1,\infty}}\right)^\frac{1}{q}\left(\avgint_Qw^{-p'}\,dx\right)^\frac{1}{p'}= 0 < \infty.
        \]
        If $0 < \sigma(Q) < \infty$, let $f := \sigma\rchi_Q$. Then $f \in L^p(w^p)$ with $\|f\|_{L^p(w^p)} = \sigma(Q)^{1/p}$ and $M_\alpha f(x) \geq |Q|^{\alpha/n - 1}\sigma(Q)$ on $Q$. Therefore, condition \eqref{multiplier_M} for $f$ implies that for any $\lambda > 0$
        \[
            \lambda \left|\left\{ x \in \mathbb{R}^n : w(x)\left(|Q|^{\frac{\alpha}{n} - 1}\sigma(Q)\right)\rchi_Q > \lambda \right\}\right|^\frac{1}{q} \lesssim \sigma(Q)^\frac{1}{p}.
        \]
        Rearranging the above inequality and setting $\lambda' = \lambda (|Q|^{\alpha/n-1}\sigma(Q))^{-1}$ gives 
        \[
            |Q|^{\frac{\alpha}{n}-1}\lambda' \left|\left\{ x \in \mathbb{R}^n : w(x)\rchi_Q > \lambda' \right\}\right|^\frac{1}{q}\left(\int_Q\sigma\,dx\right)^\frac{1}{p'} \lesssim 1.    
        \]
        Taking the supremum over $\lambda > 0$ (and thus $\lambda' > 0$), and recalling that $\sigma = w^{1-p'}$, we have
        \[
            |Q|^{\frac{\alpha}{n}-1}\|w\rchi_Q\|_{L^{q,\infty}}\left(\int_Qw^{-p'}\,dx\right)^\frac{1}{p'} \lesssim 1.
        \] 
        Lastly, noting that $\frac{\alpha}{n} - 1 = -(\frac{1}{p'} + \frac{1}{q})$ and $\|w\rchi_Q\|_{L^{q,\infty}} = (\|w^q\rchi_Q\|_{L^{1,\infty}})^\frac{1}{q}$, we have
        \[
            \left(\frac{1}{|Q|}\|w^q\rchi_Q\|_{L^{1,\infty}}\right)^\frac{1}{q}\left(\avgint_Qw^{-p'}\,dx\right)^\frac{1}{p'} \lesssim 1.
        \]
        Therefore, $w \in A_{p,q}^*$.  
    \end{proof}

\section{Sufficiency}\label{section:sufficiency}
In this section we prove the sufficiency of the $A_p^*$ and $A_{p,q}^*$ conditions for a weight to satisfy a multiplier weak-type inequality for the Hardy-Littlewood maximal operator and fractional maximal operators, respectively, when $p > 1$. This will conclude the proof of Theorems \eqref{main_theorem} and \eqref{main_theorem_fractional}. For $p = 1$, the sufficiency of the necessary condition in \cite{MR447956} for the maximal operator and Hilbert transform remains an open problem.
\begin{theorem}\label{sufficient_M} 
    If $p > 1$ and $w \in A_p^*$ then 
    \[
        \|w^\frac{1}{p}Mf\|_{L^{p,\infty}} \lesssim ([w]_{A_p^*}[\sigma]_{RH})^\frac{1}{p}\|f\|_{L^p(w)}
    \]
    for all $f \in L^p(w)$, where $[\sigma]_{RH}$ is the constant from Lemma \ref{main_lemma}.
\end{theorem}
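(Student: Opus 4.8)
The plan is to reduce to the dyadic maximal operator, linearize the weak-type estimate through a Calderón–Zygmund-type stopping-time decomposition, and then use Lemma~\ref{main_lemma} to control the relevant level sets by the $\sigma$-measure, where $\sigma := w^{1-p'}$. First, by the pointwise equivalence $Mf \approx M^{\mathcal D}f$ it suffices to bound $\|w^{1/p} M^{\mathcal D} f\|_{L^{p,\infty}}$ for a fixed dyadic lattice $\mathcal D$, losing only a dimensional constant; we may also assume $f \geq 0$, and by density it suffices to treat $f$ bounded with compact support, so all sums below are finite. Fix $\lambda > 0$. The set $\{M^{\mathcal D} f > \lambda\}$ decomposes as a disjoint union of maximal dyadic cubes $\{Q_j\}$ with $\lambda < \avg{f}_{Q_j} \leq 2^n \lambda$ (stopping at the first cube exceeding $\lambda$), and $\{x : w(x)^{1/p} M^{\mathcal D} f(x) > \lambda\} \subseteq \bigcup_j (Q_j \cap \{w > \lambda^p / \avg{f}_{Q_j}^p\}) =: \bigcup_j Q_j \cap \{w > \mu_j\}$ with $\mu_j := \lambda^p \avg{f}_{Q_j}^{-p} \geq 2^{-np}$. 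Hence
\[
    |\{w^{1/p} M^{\mathcal D} f > \lambda\}| \leq \sum_j |\{x \in Q_j : w(x) > \mu_j\}|.
\]

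Next I would estimate each term using the $L^{1,\infty}$-quasinorm bound built into $[w]_{A_p^*}$. By definition of $L^{1,\infty}$ we have $|\{x \in Q_j : w(x) > \mu_j\}| \leq \mu_j^{-1}\|w\rchi_{Q_j}\|_{L^{1,\infty}} \leq \mu_j^{-1} |Q_j| [w]_{A_p^*} \big(\avgint_{Q_j}\sigma\big)^{-(p-1)}$. Plugging in $\mu_j = \lambda^p \avg{f}_{Q_j}^{-p}$ and $\avg{f}_{Q_j} \approx |Q_j|^{-1}\int_{Q_j} f$, this gives a bound of the form
\[
    |\{x \in Q_j : w(x) > \mu_j\}| \lesssim \frac{[w]_{A_p^*}}{\lambda^p}\, \frac{\big(\int_{Q_j} f\,dx\big)^p}{|Q_j|^{p-1}}\,\Big(\avgint_{Q_j}\sigma\,dx\Big)^{-(p-1)} = \frac{[w]_{A_p^*}}{\lambda^p}\, \frac{\big(\int_{Q_j} f\,dx\big)^p}{\sigma(Q_j)^{p-1}}.
\]
Now write $\int_{Q_j} f\,dx = \int_{Q_j} (f w^{1/p}) w^{-1/p}\,dx$ and apply Hölder with exponents $p, p'$: $\int_{Q_j} f \leq \big(\int_{Q_j} f^p w\big)^{1/p}\big(\int_{Q_j} w^{-p'/p}\big)^{1/p'} = \big(\int_{Q_j} f^p w\big)^{1/p} \sigma(Q_j)^{1/p'}$. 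Raising to the $p$-th power, the $\sigma(Q_j)^{p/p'} = \sigma(Q_j)^{p-1}$ cancels the denominator, leaving $|\{x \in Q_j : w(x) > \mu_j\}| \lesssim \lambda^{-p}[w]_{A_p^*}\int_{Q_j} f^p w\,dx$. Summing over $j$ and using that the $Q_j$ are pairwise disjoint yields $\lambda^p |\{w^{1/p} M^{\mathcal D} f > \lambda\}| \lesssim [w]_{A_p^*}\|f\|_{L^p(w)}^p$, and taking the supremum over $\lambda$ finishes the argument.

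Curiously, in this direct form the factor $[\sigma]_{RH}$ does not appear — the estimate closes using only the $A_p^*$ constant and the disjointness of the stopping cubes. Since the theorem statement allows a constant $([w]_{A_p^*}[\sigma]_{RH})^{1/p}$ and $[\sigma]_{RH} = [w]_{A_p^*}^{p'-1} \geq$ a positive quantity by Lemma~\ref{main_lemma}, the bound I obtain is in fact at least as strong; I would simply note this. The one point requiring care — and the place I expect the author's proof may instead route through Lemma~\ref{main_lemma} — is the \emph{local integrability of $\sigma$} and the nondegeneracy needed for the averages $\avgint_{Q_j}\sigma$ to make sense: if $\sigma(Q_j) = 0$ or $=\infty$ the manipulation above must be interpreted via limiting/convention arguments exactly as in the proof of Theorem~\ref{necessary_M_fractional}. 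Lemma~\ref{main_lemma} guarantees $w^{1/s} \in A_p$ for all $s>1$, hence $\sigma$ is locally integrable and strictly positive a.e., which removes these degeneracies cleanly; so the logically safest route is to invoke Lemma~\ref{main_lemma} first to justify that $0 < \sigma(Q_j) < \infty$, then run the stopping-time/Hölder computation above. The main obstacle is thus not the core estimate but correctly handling these degenerate cases and confirming that no hidden use of the reverse-Hölder inequality is actually needed.
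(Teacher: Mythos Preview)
There is a genuine gap at the very first step, in the claimed inclusion
\[
\{x : w(x)^{1/p} M^{\mathcal D} f(x) > \lambda\} \subseteq \bigcup_j \bigl(Q_j \cap \{w > \lambda^p/\avg{f}_{Q_j}^p\}\bigr).
\]
Here the $Q_j$ are the maximal dyadic cubes in $\{M^{\mathcal D} f > \lambda\}$, so your right-hand side is contained in $\{M^{\mathcal D} f > \lambda\}$. But $\{w^{1/p} M^{\mathcal D} f > \lambda\}$ is \emph{not} contained in $\{M^{\mathcal D} f > \lambda\}$: wherever $w(x) > 1$ and $M^{\mathcal D}f(x)$ is slightly below $\lambda$, the point lies in the left-hand set but in none of the $Q_j$. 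Even restricted to a fixed $Q_j$ the inclusion fails in the direction you need: on $Q_j$ one has $M^{\mathcal D} f(x) \ge \avg{f}_{Q_j}$ (with strict inequality whenever a smaller subcube has larger average), so $w(x)^{1/p}M^{\mathcal D} f(x)>\lambda$ does \emph{not} force $w(x)^{1/p}\avg{f}_{Q_j}>\lambda$. In fact $Q_j\cap\{w>\mu_j\} = \{x\in Q_j: w^{1/p}\avg{f}_{Q_j}>\lambda\}$ is a \emph{subset} of $\{x\in Q_j: w^{1/p}M^{\mathcal D} f>\lambda\}$, so you have bounded the measure of a subset, which gives no control on the full level set. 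The suspicion you voice at the end---that $[\sigma]_{RH}$ mysteriously dropped out---is exactly the symptom of this error.

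The paper's proof shows why the reverse H\"older constant is genuinely needed. One cannot linearize $M^{\mathcal D} f$ at a single height; instead one decomposes over \emph{all} heights $a^k$, obtaining the full sparse family $\{Q_{k,j}\}$ with disjoint pieces $E_{k,j}\subseteq Q_{k,j}$. After the same $A_p^*$/H\"older step you carried out, one arrives at $\sum_{k,j}\avg{f\sigma^{-1}}_{\sigma,Q_{k,j}}^p\,\sigma(Q_{k,j})$, and now the cubes $Q_{k,j}$ (for different $k$) overlap. It is precisely Lemma~\ref{main_lemma}, giving $\sigma(Q_{k,j})\lesssim [\sigma]_{RH}\,\sigma(E_{k,j})$, that lets one pass to a sum over disjoint sets and close the estimate via the weighted maximal function $M_\sigma^{\mathcal D}$. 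So the missing idea is the full level-set (sparse) decomposition together with the $\sigma\in RH$ conclusion of Lemma~\ref{main_lemma}; the single-level stopping time you used is not enough.
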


\begin{proof}
    Fix a dyadic lattice $\mathcal{D}$ on $\mathbb{R}^n$. We prove this theorem for $M^D$; point-wise equivalence will then establish the result for $M$. By a limiting argument, it suffices to prove this result for $f \in L^\infty_c$,  ensuring that all sums are over a finite number of cubes. We may also assume $f \geq 0$. Fix $a \geq 2^{n+1}$, and recall that 
    \[
        \|w^\frac{1}{p}M^Df\|_{L^{p,\infty}} = \|w(M^Df)^p\|^\frac{1}{p}_{L^{1,\infty}}.
    \]
    For each $k \in \mathbb{Z}$, define the corresponding level set $\Omega_k := \{x \in \mathbb{R}^n : M^Df(x) > a^k\}$. By the Calderón-Zygmund decomposition for $f$ adapted to $\mathcal{D}$, we have for each $k \in \mathbb{Z}$
    \[
        \Omega_k = \bigcup_j Q_{k,j},
    \]
    where $\{Q_{k,j}\}$ is a mutually disjoint collection of maximal dyadic cubes for which $\avg{f}_{Q_{k,j}} \approx a^k$. By construction, the collection of cubes $\mathcal{S} := \bigcup_{k,j}\{Q_{k,j}\}$ is sparse. Denote by $\{E_{k,j}\}$ the corresponding collection of measurable subsets. With this, we have 
    \begin{align*}
        \|w(M^Df)^p\|_{L^{1,\infty}} &= \sup_{\lambda > 0}\,\lambda\,\sum_{k \in \mathbb{Z}}\,|\{x \in \Omega_k \setminus \Omega_{k + 1} : w(M^Df)^p > \lambda \}|\\
        &\lesssim \sup_{\lambda > 0}\,\lambda\,\sum_{k \in \mathbb{Z}}\,|\{x \in \Omega_k \setminus \Omega_{k + 1} : w a^{(k+1)p} > \lambda \}|\\
        &\leq \sup_{\lambda > 0}\,\lambda\,\sum_{k,j}\,|\{x \in Q_{k,j} : wa^{(k+1)p} > \lambda \}|\\
        &\lesssim \sum_{k, j}\,a^{(k+1)p}\|w\rchi_{Q_{k,j}}\|_{L^{1,\infty}}.
    \end{align*}
    Since $\mathcal{S}$ is sparse, $w \in A_p^*$ and $\sigma := w^{1-p'} \in \text{RH}$, we have
    \begin{align*}
        \sum_{k, j}\,a^{(k+1)p}\|w\rchi_{Q_{k,j}}\|_{L^{1,\infty}}
        &\lesssim \sum_{k,j}\,\avg{f}_{Q_{k,j}}^p\|w\rchi_{Q_{k,j}}\|_{L^{1,\infty}}\\ 
        &\leq [w]_{A_p^*}\sum_{k,j}\avg{f\sigma^{-1}}_{\sigma, Q_{k,j}}^p\sigma(Q_{k,j})\\
        &\lesssim [w]_{A_p^*}[\sigma]_{RH}\sum_{k,j}\avg{f\sigma^{-1}}_{\sigma, Q_{k,j}}^p\sigma(E_{k,j})\\
        &\leq [w]_{A_p^*}[\sigma]_{RH}\sum_{k,j}\int_{E_{k,j}}M_\sigma^D(f\sigma^{-1})^p\sigma\,dx\\
        &\leq [w]_{A_p^*}[\sigma]_{RH}\int_{\mathbb{R}^n}M_\sigma^D(f\sigma^{-1})^p\sigma\,dx.
    \end{align*}
    Lastly, as $M_\sigma^D$ is bounded on $L^p(\sigma)$ with constant independent of $\sigma$, we have 
    \begin{align*}
        \int_{\mathbb{R}^n}M_\sigma^D(f\sigma^{-1})^p\sigma\,dx &\lesssim \int_{\mathbb{R}^n}(f\sigma^{-1})^p\sigma\,dx = \int_{\mathbb{R}^n}f^p w\,dx,
    \end{align*}
    where we used the fact that $(p-1)(p'-1) = 1$. Therefore, 
    \[
        \|w^\frac{1}{p}M^Df\|_{L^{p,\infty}} = \|w(M^Df)^p\|^\frac{1}{p}_{L^{1,\infty}} \lesssim ([w]_{A_p^*}[\sigma]_{RH})^\frac{1}{p}\|f\|_{L^p(w)},
    \]
    as desired.
\end{proof}

\begin{theorem}\label{sufficent_M_fractional}
    Let $0 < \alpha < n$, $1 < p < \frac{n}{\alpha}$, and $q$ satisfy $\frac{1}{p} - \frac{1}{q} = \frac{\alpha}{n}$. If $w \in A_{p,q}^*$ then
    \[
        \|wM_\alpha f\|_{L^{q,\infty}} \lesssim [w]_{A_{p,q}^*}[\sigma]_{RH}^\frac{1}{q}\|f\|_{L^p(w^p)}
    \]
   for all $f \in L^p(w^p)$, where $[\sigma]_{RH}$ is the constant from Lemma \ref{main_lemma_fractional}.
\end{theorem}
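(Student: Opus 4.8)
The plan is to mirror the proof of Theorem~\ref{sufficient_M} line for line, carrying the fractional scaling through the computation and inserting one extra use of the reverse Hölder property. First I would fix a dyadic lattice $\mathcal{D}$ and, using the pointwise equivalence $M_\alpha f \approx M_\alpha^D f$, reduce to proving the bound for $M_\alpha^D$; by the same limiting argument used in Theorem~\ref{sufficient_M} it suffices to take $f \in L^\infty_c$ with $f \geq 0$. Writing $\sigma := w^{-p'}$ — which satisfies $\sigma^{1-p} = w^p$ and, by Lemma~\ref{main_lemma_fractional}, is locally integrable, lies in $RH$, and has $[\sigma]_{RH} = [w]_{A_{p,q}^*}^{p'}$ — I would use the identity $\|h\|_{L^{q,\infty}}^q = \||h|^q\|_{L^{1,\infty}}$ to reduce everything to the estimate $\|w^q(M_\alpha^D f)^q\|_{L^{1,\infty}} \lesssim [w]_{A_{p,q}^*}^q[\sigma]_{RH}\|f\|_{L^p(w^p)}^q$. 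Fixing $a \geq 2^{n+1}$ and letting $\Omega_k := \{x : M_\alpha^D f(x) > a^k\}$, the maximal dyadic cubes of $\Omega_k$ form a disjoint family $\{Q_{k,j}\}_j$ with $\Omega_k = \bigcup_j Q_{k,j}$ and $a^k < |Q_{k,j}|^{\alpha/n}\avg{f}_{Q_{k,j}} \leq 2^n a^k$.

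The first point that genuinely differs from Theorem~\ref{sufficient_M} is that $\mathcal{S} := \bigcup_{k,j}\{Q_{k,j}\}$ is still sparse. For $Q_{k+1,i} \subseteq Q_{k,j}$, comparing $\int_{Q_{k,j}} f \leq 2^n a^k|Q_{k,j}|^{1-\alpha/n}$ with $\sum_i \int_{Q_{k+1,i}} f > a^{k+1}\sum_i |Q_{k+1,i}|^{1-\alpha/n}$ gives $\sum_i |Q_{k+1,i}|^{1-\alpha/n} < 2^n a^{-1}|Q_{k,j}|^{1-\alpha/n}$; since $0 < 1-\alpha/n \leq 1$ and $|Q_{k+1,i}| \leq |Q_{k,j}|$, multiplying each summand by $|Q_{k+1,i}|^{\alpha/n} \leq |Q_{k,j}|^{\alpha/n}$ yields $\sum_i |Q_{k+1,i}| < 2^n a^{-1}|Q_{k,j}| \leq \tfrac12 |Q_{k,j}|$, so $E_{k,j} := Q_{k,j}\setminus\bigcup_{Q_{k+1,i}\subseteq Q_{k,j}} Q_{k+1,i}$ furnishes a sparse family. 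Decomposing $\mathbb{R}^n$ along the $\Omega_k\setminus\Omega_{k+1}$, using $M_\alpha^D f \leq a^{k+1}$ there, and bounding $\lambda\,|\{x\in Q_{k,j} : w^q a^{(k+1)q} > \lambda\}| \leq a^{(k+1)q}\|w^q\rchi_{Q_{k,j}}\|_{L^{1,\infty}}$ termwise, I get
\[
    \|w^q(M_\alpha^D f)^q\|_{L^{1,\infty}} \lesssim \sum_{k,j} a^{(k+1)q}\|w^q\rchi_{Q_{k,j}}\|_{L^{1,\infty}} \lesssim \sum_{k,j}\big(|Q_{k,j}|^{\alpha/n}\avg{f}_{Q_{k,j}}\big)^q\|w^q\rchi_{Q_{k,j}}\|_{L^{1,\infty}}.
\]

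Next I would insert the $A_{p,q}^*$ bound $\frac{1}{|Q|}\|w^q\rchi_Q\|_{L^{1,\infty}} \leq [w]_{A_{p,q}^*}^q(\avgint_Q \sigma)^{-q/p'}$ and write $\avg{f}_Q = \frac{\sigma(Q)}{|Q|}\avg{f\sigma^{-1}}_{\sigma,Q}$; using $\frac1p - \frac1q = \frac{\alpha}{n}$ together with $\frac1p + \frac1{p'} = 1$, every power of $|Q_{k,j}|$ cancels and the right-hand side becomes $[w]_{A_{p,q}^*}^q\sum_{k,j}\avg{f\sigma^{-1}}_{\sigma,Q_{k,j}}^q\,\sigma(Q_{k,j})^{q/p}$. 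Since $q/p - 1 = q\alpha/n$, I split $\sigma(Q_{k,j})^{q/p} = \sigma(Q_{k,j})^{q\alpha/n}\sigma(Q_{k,j})$ and apply sparseness together with Lemma~\ref{main_lemma_fractional} in the form $\sigma(Q_{k,j}) \lesssim [\sigma]_{RH}\sigma(E_{k,j})$, obtaining (by disjointness of the $E_{k,j}$)
\[
    \sum_{k,j}\avg{f\sigma^{-1}}_{\sigma,Q_{k,j}}^q\sigma(Q_{k,j})^{q/p} \lesssim [\sigma]_{RH}\int_{\mathbb{R}^n}\Big(\sup_{Q\ni x,\,Q\in\mathcal D}\sigma(Q)^{\alpha/n}\avg{f\sigma^{-1}}_{\sigma,Q}\Big)^q\,\sigma(x)\,dx.
\]
The last ingredient is that the $\sigma$-measure fractional maximal operator $N_\sigma g(x) := \sup_{Q\ni x}\sigma(Q)^{\alpha/n}\avg{g}_{\sigma,Q}$ maps $L^p(\sigma)\to L^q(\sigma)$ with constant independent of $\sigma$; this follows from the pointwise estimate $N_\mu g \leq \|g\|_{L^p(\mu)}^{1-p/q}(M_\mu^D g)^{p/q}$ (a consequence of $\avg{g}_{\mu,Q}\leq\|g\|_{L^p(\mu)}\mu(Q)^{-1/p}$ and $\frac{\alpha}{n} = \frac1p - \frac1q$) combined with the $\mu$-uniform boundedness of $M_\mu^D$ on $L^p(\mu)$. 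Applying this with $\mu = \sigma$, $g = f\sigma^{-1}$ bounds the last integral by $\|f\sigma^{-1}\|_{L^p(\sigma)}^q = \big(\int f^p\sigma^{1-p}\,dx\big)^{q/p} = \|f\|_{L^p(w^p)}^q$; collecting constants and taking $q$-th roots gives the claim.

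The two steps I expect to require the most care are the sparseness verification — where the concavity of $t\mapsto t^{1-\alpha/n}$ is essential and has no analogue in the case $\alpha = 0$ — and stating cleanly the $\sigma$-uniform $L^p(\sigma)\to L^q(\sigma)$ bound for $N_\sigma$ (its proof is short, but one must keep in mind that the fractional factor there is $\sigma(Q)^{\alpha/n}$, not $|Q|^{\alpha/n}$, so this is a statement about the maximal operator attached to the arbitrary measure $\sigma$ rather than about the paper's $M_{\alpha,\sigma}$). Everything else is bookkeeping that runs parallel to Theorem~\ref{sufficient_M}.
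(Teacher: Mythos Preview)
Your proposal is correct and follows essentially the same route as the paper: the same dyadic reduction, the same level-set decomposition into a sparse family, the same conversion of $|Q|^{\alpha/n}\langle f\rangle_Q$ into $\sigma(Q)^{\alpha/n}\langle f\sigma^{-1}\rangle_{\sigma,Q}$ via the $A_{p,q}^*$ condition, the same single use of Lemma~\ref{main_lemma_fractional} to pass from $\sigma(Q_{k,j})$ to $\sigma(E_{k,j})$, and the same appeal to the $L^p(\sigma)\to L^q(\sigma)$ boundedness of the $\sigma$-fractional maximal operator. The only differences are cosmetic: you verify sparseness by hand (the paper cites a reference) and you sketch the $L^p(\sigma)\to L^q(\sigma)$ bound for $N_\sigma$ (the paper simply asserts it); in fact your $N_\sigma$ \emph{is} the paper's $M_{\alpha,\sigma}^D$, so the caution in your last paragraph is unnecessary.
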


\begin{proof}
    The proof proceeds like that for $M$. Fix a dyadic lattice $\mathcal{D}$ on $\mathbb{R}^n$. We prove this theorem for $M_\alpha^D$; point-wise equivalence will then establish the result for $M_\alpha$. By a limiting argument, it suffices to prove this result for $f \in L^\infty_c$,  ensuring that all sums are over a finite number of cubes. We may also assume $f \geq 0$. Fix $a \geq 2^{n+1-\alpha}$, and recall that 
    \[
        \|wM_\alpha^Df\|_{L^{q,\infty}} = \|w^q(M^Df)^q\|^\frac{1}{q}_{L^{1,\infty}}.
    \]
    For each $k \in \mathbb{Z}$, define the corresponding level set $\Omega_k := \{x \in \mathbb{R}^n : M_\alpha^Df(x) > a^k\}$. By the fractional Calderón-Zygmund decomposition for $f$ adapted to $\mathcal{D}$, we have for each $k \in \mathbb{Z}$
    \[
        \Omega_k = \bigcup_j Q_{k,j},
    \]
    where $\{Q_{k,j}\}$ is a mutually disjoint collection of maximal dyadic cubes with $|Q|^\frac{\alpha}{n}\avg{f}_{Q_{k,j}} \approx a^k$. By construction, the collection of cubes $\mathcal{S} := \bigcup_{k,j}\{Q_{k,j}\}$ is sparse \cite{CruzUribe:2016ji}. Denote by $\{E_{k,j}\}$ the corresponding collection of measurable subsets. With this, we have 
    \begin{align*}
        \|w^q(M_\alpha^Df)^q\|_{L^{1,\infty}} &= \sup_{\lambda > 0}\,\lambda\,\sum_{k \in \mathbb{Z}}\,|\{x \in \Omega_k \setminus \Omega_{k + 1} : w(M_\alpha^Df)^q > \lambda \}|\\
        &\lesssim \sup_{\lambda > 0}\,\lambda\,\sum_{k \in \mathbb{Z}}\,|\{x \in \Omega_k \setminus \Omega_{k + 1} : w a^{(k+1)q} > \lambda \}|\\
        &\leq \sup_{\lambda > 0}\,\lambda\,\sum_{k,j}\,|\{x \in Q_{k,j} : wa^{(k+1)q} > \lambda \}|\\
        &\lesssim \sum_{k, j}\,a^{(k+1)q}\|w\rchi_{Q_{k,j}}\|_{L^{1,\infty}}.
    \end{align*}
    Since $\mathcal{S}$ is sparse, $w \in A_{p,q}^*$ and $\sigma := w^{-p'} \in \text{RH}$, noting that $q - \frac{q}{p'} = 1 + q\frac{\alpha}{n}$, we have
    \begin{align*}
        \sum_{k, j}\,a^{(k+1)q}\|w^q\rchi_{Q_{k,j}}\|_{L^{1,\infty}}
        &\lesssim \sum_{k,j}(|Q|^\frac{\alpha}{n}\,\avg{f}_{Q_{k,j}})^q\|w^q\rchi_{Q_{k,j}}\|_{L^{1,\infty}}\\ 
        &\leq [w]_{A_{p,q}^*}^q\sum_{k,j}\avg{f\sigma^{-1}}_{\sigma, Q_{k,j}}^q\sigma(Q_{k,j})^{q-\frac{q}{p'}}\\
        &\lesssim [w]_{A_{p,q}^*}^q[\sigma]_{RH}\sum_{k,j}(\sigma(Q)^\frac{\alpha}{n}\avg{f\sigma^{-1}}_{\sigma, Q_{k,j}})^q\sigma(E_{k,j})\\
        &\leq [w]_{A_{p,q}^*}^q[\sigma]_{RH}\sum_{k,j}\int_{E_{k,j}}M_{\alpha, \sigma}^D(f\sigma^{-1})^q\sigma\,dx\\
        &\leq [w]_{A_{p,q}^*}^q[\sigma]_{RH}\int_{\mathbb{R}^n}M_{\alpha, \sigma}^D(f\sigma^{-1})^q\sigma\,dx.
    \end{align*}
    Lastly, as $M_{\alpha, \sigma}^D$ is bounded from $L^p(\sigma)$ to $L^q(\sigma)$ with constant independent of $\sigma$, we have 
    \begin{align*}
        \int_{\mathbb{R}^n}M^D_{\alpha,\sigma}(f\sigma^{-1})^q\sigma\,dx &\lesssim \left(\int_{\mathbb{R}^n}(f\sigma^{-1})^p\,\sigma\,dx\right)^\frac{q}{p} = \left(\int_{\mathbb{R}^n}f^p w^p\,dx\right)^\frac{q}{p},
    \end{align*}
    where we used the fact that $pp' = p+p'$. Therefore, 
    \[
        \|wM_\alpha^Df\|_{L^{q,\infty}} = \|w^q(M_\alpha^Df)^q\|^\frac{1}{q}_{L^{1,\infty}} \lesssim [w]_{A_{p,q}^*}[\sigma]_{RH}^\frac{1}{q}\|f\|_{L^p(w^p)},
    \]
    as desired.
\end{proof}

\bibliographystyle{ieeetr}
\bibliography{sufficiency}

\end{document}